\documentclass[a4paper,12pt]{amsart}
\textwidth 17cm
\oddsidemargin -0.3cm
\evensidemargin -0.3cm

\def\norm{\operatorname{norm}\nolimits}
\def\pol{\operatorname{pol}\nolimits}
\def\BF{{\mathbf{F}}}
\def\BZ{{\mathbf{Z}}}
\def\CI{{\mathcal{I}}}
\def\CJ{{\mathcal{J}}}
\def\Hom{\operatorname{Hom}\nolimits}

\def\Ind{\operatorname{Ind}\nolimits}
\def\Ext{\operatorname{Ext}\nolimits}
\def\Res{\operatorname{Res}\nolimits}

\def\res{\operatorname{res}\nolimits}
\def\iso{\buildrel \sim\over\to}
\newtheorem{thm}{Theorem}[section]
\newtheorem{lemma}[thm]{Lemma}
\newtheorem{cor}[thm]{Corollary}

\theoremstyle{definition}
\newtheorem{rem}[thm]{Remark}

\begin{document}
\title{Finite generation of cohomology of finite groups}
\author{Rapha\"el Rouquier}
\address{Department of Mathematics, UCLA, Box 951555,
Los Angeles, CA 90095-1555, USA}
\email{rouquier@math.ucla.edu}
\thanks{The author was partially supported by the NSF grant DMS-1161999.}

\date{October 2014}
\maketitle

\begin{abstract}
We give a proof of the finite generation of the cohomology ring of a finite
$p$-group over $\BF_p$
by reduction to the case of elementary abelian groups, based on
Serre's Theorem on products of Bocksteins.
\end{abstract}

\section{Definitions and basic properties}
Let $k=\BF_p$. Given $G$ a finite group, we put
$H^*(G)=H^*(G,k)$. We refer to \cite{Ev} for results on group cohomology.

Given $A$ a ring and $M$ an $A$-module, we say that $M$ is finite over $A$
if it is a finitely generated $A$-module.

\smallskip
Let $G$ be a finite group and $L$ a subgroup of $G$. 
We have a restriction map $\res_L^G:H^*(G)\to H^*(L)$. It gives
$H^*(L)$ the structure of an $H^*(G)$-module.

We denote by $\norm_L^G:H^*(L)\to H^*(G)$ the norm map. If $L$ is central
in $G$, then we have $\res_L^G\norm_L^G(\xi)=\xi^{[G:L]}$ for all $\xi\in H^*(L)$.

When $L$ is normal in $G$,
we denote by $\inf^G_{G/L}:H^*(G/L)\to H^*(G)$ the inflation map.

\smallskip
Let $E$ be an elementary abelian $p$-group.
The Bockstein $H^1(E)\to H^2(E)$ induces an injective
morphism of algebras $S(H^1(E))\hookrightarrow H^*(E)$.
We denote by $H^*_{\pol}(E)$ its image.
Note that $H^*(E)$ is a finitely generated $H^*_{\pol}(E)$-module and
given $\xi\in H^*(E)$, we have $\xi^p\in H^*_{\pol}(E)$.

\section{Finite generation for finite groups}

The following result is classical. We provide here a proof independent
of the finite generation of cohomology rings.

\begin{lemma}
\label{le:onto}
Let $G$ be a $p$-group and $E$ an elementary abelian subgroup.
Then, $H^*(E)$ is finite over $H^{\mathrm{even}}(G)$.
\end{lemma}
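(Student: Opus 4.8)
The plan is to reduce the statement to an integrality assertion and then to induct on a central subgroup. Since $H^*(E)$ is finite over $H^*_{\pol}(E)$, and the latter is the polynomial algebra on the Bocksteins of a basis of $H^1(E)$, it suffices to prove that $H^*(E)$ is finite over $R:=\res_E^G(H^{\mathrm{even}}(G))$; in fact it is enough to exhibit, for each basis element, a monic equation for its Bockstein over $R$. I would argue by induction on the pair $(|G|,[G:E])$ ordered lexicographically, the case $E=G$ being immediate because $H^*_{\pol}(E)\subseteq H^{\mathrm{even}}(E)$. When $[G:E]>1$ the group $G$ is nontrivial, so I may choose a central subgroup $Z\le G$ with $|Z|=p$, and distinguish two cases according to whether $Z\subseteq E$.

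First suppose $Z\not\subseteq E$. Then $EZ$ is abelian of exponent $p$, hence elementary abelian, and $[G:EZ]=[G:E]/p$. By induction $H^*(EZ)$ is finite over $\res_{EZ}^G(H^{\mathrm{even}}(G))$; taking even parts (a direct summand over the even subring) and restricting to $E$, this descends to the statement for $E$, using that $\res_E^{EZ}$ is onto in degree one and hence carries $H^*_{\pol}(EZ)$ onto $H^*_{\pol}(E)$, together with transitivity of finiteness. This case is essentially formal.

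The substantive case is $Z\subseteq E$. Set $\bar G=G/Z$ and $\bar E=E/Z$, again elementary abelian, with $[\bar G:\bar E]=[G:E]$ but $|\bar G|<|G|$. By induction $H^*(\bar E)$ is finite over $\res_{\bar E}^{\bar G}(H^{\mathrm{even}}(\bar G))$, and applying $\inf^E_{\bar E}$ together with the compatibility $\res_E^G\circ\inf^G_{\bar G}=\inf^E_{\bar E}\circ\res_{\bar E}^{\bar G}$ shows that the inflated classes $\inf^E_{\bar E}(H^*(\bar E))$ are finite over $R$. Choosing a complement so that $E=Z\times\bar E$ and a class $\tilde u\in H^1(E)$ restricting to a generator $u$ of $H^1(Z)$ and to $0$ on the complement, the only polynomial generator of $H^*(E)$ not yet controlled is $\beta\tilde u$. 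Here I would use centrality of $Z$: the identity $\res_Z^G\norm_Z^G(\beta u)=(\beta u)^{[G:Z]}$ produces an element $w:=\res_E^G\norm_Z^G(\beta u)\in R$ whose restriction to $Z$ equals $(\beta\tilde u)^{[G:Z]}$.

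The main obstacle is that $w$ coincides with $(\beta\tilde u)^{[G:Z]}$ only modulo the ideal generated by the positive-degree inflated classes from $\bar E$, so the norm map does not by itself place a power of $\beta\tilde u$ in $R$. The point I expect to require care is a degree count: writing $w-(\beta\tilde u)^{[G:Z]}$ in the tensor decomposition $H^*(E)=H^*(Z)\otimes H^*(\bar E)$, every monomial involves a positive-degree factor from $\bar E$ and therefore carries $\beta\tilde u$ to a power strictly below $[G:Z]$. This turns the relation into a genuine monic equation for $\beta\tilde u$ over the subalgebra $B$ generated by $R$, by $\tilde u$ (which for $p$ odd is nilpotent, since $\tilde u^2=0$; the case $p=2$ is similar and simpler), and by the inflated classes. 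Since $B$ is finite over $R$ and $H^*(E)=B[\beta\tilde u]$, finiteness over $R$, and hence over $H^{\mathrm{even}}(G)$, follows.
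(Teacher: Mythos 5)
Your argument is correct and is essentially the paper's proof: induction through a central subgroup $Z$ of order $p$, the identity $\res_Z^G\norm_Z^G(\xi)=\xi^{[G:Z]}$ to produce an element of $\res_E^G(H^{\mathrm{even}}(G))$ agreeing with $(\beta\tilde u)^{[G:Z]}$ up to terms of lower $\beta\tilde u$-degree with inflated coefficients, and the inductive hypothesis for $(G/Z,E/Z)$ to control those coefficients. The only (cosmetic) differences are that the paper reduces at the outset to $E$ maximal elementary abelian, so that $Z\le E$ automatically and your first case disappears, and it raises the norm class to the $p$-th power so the degree count takes place entirely inside $H^*_{\pol}(E)$ rather than in all of $H^*(Z)\otimes H^*(E/Z)$.
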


\begin{proof}
The result is straightforward when $G$ is elementary abelian. As a consequence,
given $G$,
it is enough to prove the lemma when $E$ is a maximal elementary abelian
subgroup.
We prove the lemma by induction on $|G|$. Let $Z\le Z(G)$ with $|Z|=p$.
Let $P$ be a complement to $Z$ in $E$. Let $A=
\mathrm{inf}_{E/Z}^E(H^*_{\pol}(E/Z))$. Let $x$ be a generator of
$H^2(Z)\iso H^2(E/P)$ and $y=\inf_{E/P}^E(x)$.
We have
$$H^*_{\pol}(E)=A\otimes k[y].$$

Let $\xi=\res_E^G(\norm_Z^G(x)^p)$.
We have $\res_Z^E(\xi)=x^{p[G:Z]}$, so $\xi-y^{p[G:Z]}\in H^*_{\pol}(E)\cap
\ker\res_Z^E= A^{>0}H^*_{\pol}(E)$.
We deduce that $H^*(E)$ is finite over its subalgebra generated
by $A$ and $\xi$. 

By induction, $H^*(E/Z)$ is finite over $H^*(G/Z)$. We deduce that
$H^*(E)$ is finite over its subalgebra generated by $\xi$ and 
$\mathrm{\inf}_{E/Z}^E\res_{E/Z}^{G/Z}H^*(G/Z)=
\res_E^G\mathrm{inf}_{G/Z}^GH^*(G/Z)$.
\end{proof}

Let us recall a form of Serre's Theorem on product of Bocksteins
\cite{Se}. We state the result over the integers for a useful consequence
stated in Corollary \ref{co:generation}.

\begin{thm}[Serre]
\label{th:Serre}
Let $G$ be a finite $p$-group.
Assume $G$ is not elementary abelian. Then, there is $n\ge 2$, there are
subgroups $H_1,\ldots,H_n$ of index $p$ of $G$ and an exact
sequence of $\BZ G$-modules
$$0\to \BZ\to \Ind_{H_n}^G\BZ\to\cdots\to\Ind_{H_1}^G \BZ\to \BZ\to 0$$
defining a zero class in $\Ext^n_{\BZ G}(\BZ,\BZ)$.
\end{thm}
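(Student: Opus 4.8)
The plan is to deduce the stated integral, permutation-module form from Serre's classical theorem on the vanishing of products of Bocksteins, which I take as the essential input of \cite{Se}. Recall that form: since $G$ is not elementary abelian, $\Phi(G)\neq 1$, so $H^1(G,\BF_p)=\Hom(G/\Phi(G),\BF_p)$ is nonzero and all its classes are inflated from $G/\Phi(G)$; Serre's theorem provides nonzero classes $x_1,\dots,x_m\in H^1(G,\BF_p)$ whose integral Bocksteins $\beta(x_i)\in H^2(G,\BZ)$ satisfy $\beta(x_1)\cdots\beta(x_m)=0$ in $H^{2m}(G,\BZ)=\Ext^{2m}_{\BZ G}(\BZ,\BZ)$. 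I would first record that $m\ge 1$, which will force $n=2m\ge 2$.

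Next I would realise each factor $\beta(x_i)$ as an explicit $2$-extension of permutation modules. Each nonzero $x_i$ is a surjection $G\to\BZ/p$ with kernel a subgroup $H_i$ of index $p$ (automatically normal), so $\BZ[G/H_i]=\Ind_{H_i}^G\BZ$. Writing $t$ for the image of a generator and $N=1+t+\cdots+t^{p-1}$, the truncated periodic resolution of the cyclic quotient gives an exact sequence
\[ 0\to\BZ\xrightarrow{N}\Ind_{H_i}^G\BZ\xrightarrow{t-1}\Ind_{H_i}^G\BZ\xrightarrow{\varepsilon}\BZ\to 0 \]
of $\BZ G$-modules, whose Yoneda class is the inflation to $G$ of the generator of $\Ext^2_{\BZ[\BZ/p]}(\BZ,\BZ)$, namely $\beta(x_i)$. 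Verifying exactness (the $t$-fixed points of $\Ind_{H_i}^G\BZ$ are exactly the multiples of $N$, and $\operatorname{im}(t-1)=\ker\varepsilon$) and that this class is indeed $\beta(x_i)$ is routine but should be done with care.

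Then I would splice these $m$ extensions by the Yoneda product. Since the cup product on $\Ext^*_{\BZ G}(\BZ,\BZ)=H^*(G,\BZ)$ agrees with Yoneda composition, concatenating the $2$-extensions for $x_1,\dots,x_m$ yields an exact sequence
\[ 0\to\BZ\to\Ind_{H_m}^G\BZ\to\Ind_{H_m}^G\BZ\to\cdots\to\Ind_{H_1}^G\BZ\to\Ind_{H_1}^G\BZ\to\BZ\to 0 \]
whose class in $\Ext^{2m}_{\BZ G}(\BZ,\BZ)$ equals $\beta(x_1)\cdots\beta(x_m)=0$. Relabelling the $2m$ middle terms and setting $n=2m$ gives exactly the asserted sequence, with index-$p$ subgroups occurring in equal consecutive pairs and $n\ge 2$.

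The genuine obstacle is Serre's vanishing statement itself, imported from \cite{Se}; the passage to the integral extension form above is formal by comparison. A self-contained argument would first reduce, by inflation along $G\to G/Z$ for a central $Z\le\Phi(G)$ of order $p$, to the case $\Phi(G)\cong\BZ/p$ (using that $\inf$ is a ring homomorphism inducing an isomorphism on $H^1$, so that a vanishing product for $G/Z$ inflates to one for $G$), and would then prove the base case by a direct computation in $H^*(G/\Phi(G),\BF_p)$: showing that some product of the Bockstein classes $\beta(\ell_i)$ lies in the ideal generated by the nonzero extension class $e\in H^2(G/\Phi(G),\BF_p)$, which inflates to $0$, whence that product inflates to $0$ in $H^*(G)$. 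That combinatorial computation in the cohomology of an elementary abelian group is the hard core. A minor point to check is that the mod-$p$ and integral forms agree here, which holds because the classes $\beta(x_i)$ are $p$-torsion and reduction mod $p$ is injective on the relevant subgroup.
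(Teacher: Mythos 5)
Your overall strategy---realize each Bockstein as a $2$-extension by the permutation modules $\Ind_{H_i}^G\BZ$ and splice these by the Yoneda product---is the paper's, and those steps are sound. The genuine gap is the passage from Serre's theorem, which asserts the vanishing of the product of the \emph{mod-$p$} Bocksteins, $\beta(x_1)\cdots\beta(x_m)=0$ in $H^{2m}(G,\BZ/p)$, to the vanishing in $H^{2m}(G,\BZ)$ of the product of the \emph{integral} classes $c_i\in H^2(G,\BZ)$ of your $2$-extensions, which is what the spliced sequence actually requires. Your justification---that reduction mod $p$ is injective on the relevant ($p$-torsion) subgroup---is false: the kernel of $H^{2m}(G,\BZ)\to H^{2m}(G,\BZ/p)$ is $pH^{2m}(G,\BZ)$, which can contain nonzero elements of order $p$. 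Concretely, for $G=\BZ/p^2$ (not elementary abelian, hence a legitimate instance of the theorem), Serre's statement holds with $m=1$ because the generator $x$ of $H^1(G,\BF_p)$ lifts to $\Hom(G,\BZ/p^2)$ and so has vanishing mod-$p$ Bockstein; yet its integral Bockstein is the nonzero element of order $p$ in $H^2(G,\BZ)\simeq\BZ/p^2$, so your proposed sequence with $n=2$ would have a \emph{nonzero} Yoneda class.

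The paper closes exactly this gap with one further idea that your write-up is missing: since the product $c=c_1\cdots c_m$ of the integral classes reduces to $0$ mod $p$, it lies in $pH^{2m}(G,\BZ)$; writing $|G|=p^r$ and using $|G|\cdot H^{>0}(G,\BZ)=0$, one gets $c^r\in p^rH^{2mr}(G,\BZ)=0$. One therefore splices $rm$ copies of the $2$-extensions, obtaining $n=2mr$ with each $H_i$ repeated $r$ times, rather than $n=2m$. (The theorem does not require the $H_i$ to be distinct, so this is harmless.) With that modification the rest of your argument---the identification of each $2$-extension's class, the compatibility of cup and Yoneda products, and the bound $n\ge 2$---goes through.
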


\begin{proof}
Serre shows there are elements $z_1,\ldots,z_m\in H^1(G,\BZ/p)$ such that
$\beta(z_1)\cdots\beta(z_m)=0$. The element $z_i$ corresponds to
a surjective morphism $G\to\BZ/p$ with kernel $H_i$, and we identify
$\Ind_{H_i}^G\BZ$ with $\BZ[G/H_i]=\BZ[\sigma]/(\sigma^p-1)$, where
$\sigma$ is a generator of $G/H_i$.
The element $\beta(z_i)\in H^2(G,\BZ/p)$
is the image of the class $c_i\in H^2(G,\BZ)$ given by the exact sequence
$$0\to \BZ\xrightarrow{1+\sigma+\cdots+\sigma^{p-1}}
\Ind_{H_i}^G\BZ\xrightarrow{1-\sigma}
\Ind_{H_i}^G\BZ\xrightarrow{\text{augmentation}}\BZ\to 0.$$
Let $c=c_1\cdots c_m\in H^{2m}(G,\BZ)$.
The image of $c$ in $H^{2m}(G,\BZ/p)$ vanishes, hence $c\in pH^{2m}(G,\BZ)$.
Fix $r$ such that $|G|=p^r$. 
Since $|G|H^{>0}(G,\BZ)=0$, we deduce that $c^r=0$.
\end{proof}

We will only need the case $R=\BF_p$ of the corollary below.
We denote by $D^b(RG)$ the
derived category of bounded complexes of finitely generated $RG$-modules.

\begin{cor}
\label{co:generation}
Let $G$ be a finite group and $R$ a discrete valuation ring with residue
field of characteristic $p$ or a field of charactetistic $p$. Assume
$x^{p-1}=1$ has $p-1$ solutions in $R$. 

Let
$\CI$ be the thick
subcategory of $D^b(RG)$ generated by modules of the form
$\Ind_E^G M$, where $E$ runs over elementary abelian
subgroups of $G$ and $M$ runs over one-dimensional
representations of $E$ over $R$.

We have $\CI=D^b(RG)$.
\end{cor}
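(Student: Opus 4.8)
The plan is to reduce to the case where $G$ is a $p$-group and then induct on $|G|$, using Serre's Theorem \ref{th:Serre} as the engine of the induction: the vanishing of the $\Ext$-class it provides will exhibit, in $D^b(RG)$, the trivial module as a direct summand of a complex built from modules induced from proper subgroups. For the reduction to $p$-groups, let $P$ be a Sylow $p$-subgroup of $G$. For any $V\in D^b(RG)$ the composite of the unit and counit of the $(\Ind_P^G,\Res_P^G)$-adjunction, $V\to\Ind_P^G\Res_P^G V\to V$, is multiplication by $[G:P]$, which is prime to $p$ and hence invertible in $R$; so $V$ is a direct summand of $\Ind_P^G\Res_P^G V$. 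Since $\Ind_P^G\Ind_E^P M=\Ind_E^G M$ and an elementary abelian subgroup of $P$ is one of $G$, the exact functor $\Ind_P^G$ carries the analogue of $\CI$ for $P$ into $\CI$. Thus if the statement holds for $P$ it holds for $G$, and I may assume $G$ is a $p$-group.

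Now I induct on $|G|$. If $G$ is elementary abelian, then $G$ itself is one of the allowed subgroups $E$, so the trivial module $R=\Ind_G^G R$ lies in $\CI$; since $D^b(RG)$ is generated as a thick subcategory by its simple modules, and over $R=\BF_p$ a $p$-group has only the trivial simple module, this already gives $\CI=D^b(RG)$. This is the point at which the hypotheses on $R$, guaranteeing enough one-dimensional modules of elementary abelian groups, would be needed in the general case.

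If $G$ is not elementary abelian, I apply Serre's Theorem. Tensoring the exact sequence of Theorem \ref{th:Serre} with $R$ over $\BZ$ — which preserves exactness because the sequence is $\BZ$-split, the terms $\Ind_{H_i}^G\BZ$ being free over $\BZ$ — yields an exact sequence $0\to R\to\Ind_{H_n}^G R\to\cdots\to\Ind_{H_1}^G R\to R\to 0$ whose class in $\Ext^n_{RG}(R,R)$ is the image of the zero class of Theorem \ref{th:Serre}, hence zero. The crux is to translate this into $D^b(RG)$: letting $P^\bullet$ be the complex $\Ind_{H_n}^G R\to\cdots\to\Ind_{H_1}^G R$ of the middle terms, placed in degrees $-n,\ldots,-1$, one checks from exactness that $H^{-n}(P^\bullet)\cong R$, $H^{-1}(P^\bullet)\cong R$ and the other cohomologies vanish, producing a triangle $R[n]\to P^\bullet\to R[1]\xrightarrow{\,c\,}R[n+1]$ whose connecting morphism $c$ is, up to sign, the extension class under $\Hom_{D^b(RG)}(R[1],R[n+1])=\Ext^n_{RG}(R,R)$. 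As $c=0$ the triangle splits, so $R$ is a direct summand of $P^\bullet$. Each $H_i$ is a $p$-group of order $|G|/p$, so by the inductive hypothesis $\CI(H_i)=D^b(RH_i)\ni R$, whence $\Ind_{H_i}^G R\in\CI$; therefore $P^\bullet\in\CI$ and hence $R\in\CI$. Generation of $D^b(RG)$ by the trivial module then gives $\CI=D^b(RG)$.

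The step I expect to be the main obstacle is this derived-category translation: making precise that the vanishing of the Yoneda class of the sequence is exactly the splitting of the triangle $R[n]\to P^\bullet\to R[1]\to$, that is, identifying its connecting morphism with the $\Ext$-class and controlling the intervening signs and degree conventions. Once that is in place, the remainder is the formal bookkeeping of thick subcategories together with the transfer argument of the Sylow reduction.
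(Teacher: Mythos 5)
Your overall strategy---Sylow transfer to reduce to $p$-groups, then induction on $|G|$ using Serre's Theorem to realize the relevant object as a direct summand in $D^b(RG)$ of a complex of modules induced from index-$p$ subgroups---is exactly the paper's. Your explicit treatment of the passage from the vanishing Yoneda class to the splitting of the triangle is fine, and indeed more careful than the paper, which simply asserts the splitting; the possible off-by-one shift in your triangle is harmless since $\CI$ is closed under shifts.

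There is, however, a genuine gap relative to the statement as given, which allows $R$ to be a discrete valuation ring. You rely twice on the assertion that $D^b(RG)$ is generated as a thick subcategory by the simple (equivalently, for a $p$-group, the trivial) module: once in the elementary abelian base case, and again at the end of the inductive step (``generation of $D^b(RG)$ by the trivial module then gives $\CI=D^b(RG)$''). This is true when $R$ is a field of characteristic $p$, so your proof is complete for $R=\BF_p$---the only case the paper actually uses later---but it fails for a DVR: for $R=\BZ_p$ and $G$ cyclic of order $p$, the free module $RG$ does not lie in the thick subcategory generated by the trivial module (tensor with the fraction field: $\BQ_pG$ has a nontrivial simple summand), and the residue-field simples generate only complexes with torsion cohomology, so neither version of your generation claim holds. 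The paper avoids this at both points: in the base case it takes a projective cover $P\to L$ and filters its ($R$-free) kernel by $RG$-modules that are $R$-free of rank one (this is where the hypothesis on $x^{p-1}=1$ enters); and in the inductive step it tensors Serre's sequence with an arbitrary finitely generated module $L$ and uses the projection formula, so that $L$ itself---not merely the trivial module---is exhibited as a summand of a complex with terms $\Ind_{H_i}^G\Res_{H_i}^G(L)$, each in $\CI$ by induction on $|G|$. If you adopt that form of the inductive step and supply the rank-one filtration in the base case, no generation statement for the trivial module is needed and the DVR case goes through.
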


\begin{proof}
Assume first $G$ is an elementary abelian $p$-group.
Let $L$ be a finitely generated
$RG$-module. Consider a projective cover $f:P\to L$ and let
$L'=\ker f$. The $R$-module
$L'$ is free, so $L'$ is an extension of 
$RG$-modules that are free of rank $1$ as $R$-modules. So $L'\in\CI$ and
similarly $P\in\CI$, hence $L\in\CI$. As a consequence, the corollary
holds for $G$ elementary abelian.

Assume now $G$ is a $p$-group that is 
not elementary abelian. We proceed by induction on $|G|$. 
Let $L$ be a finitely generated $RG$-module. By induction,
$\Ind_H^G\Res_H^G(L)\in\CI$ whenever $H$ is a proper subgroup of $G$.
Applying $L\otimes_{\BZ G}-$ to the exact sequence of Theorem 
\ref{th:Serre}, we obtain an exact sequence
$$0\to L\to \Ind_{H_n}^G\Res_{H_n}^G(L)\to\cdots\to\Ind_{H_1}^G \Res_{H_1}^G
(L)\to L\to 0.$$
Since that sequence defines the zero class in $\Ext^n(L,L)$,
it follows that $L$ is a direct summand of 
$0\to \Ind_{H_n}^G\Res_{H_n}^G(L)\to\cdots\to\Ind_{H_1}^G \Res_{H_1}^G
(L)\to 0$ in $D^b(RG)$. We deduce that $L\in\CI$.

Finally, assume $G$ is a finite group. Let $P$ be a Sylow $p$-subgroup of $G$
and let $L$ a finitely generated $RG$-module. 
We know that $\Ind_P^G\Res_P^G(L)\in\CI$.
Since $L$ is a direct summand of $\Ind_P^G\Res_P^G(L)$, we deduce that
$L\in\CI$.
\end{proof}

\begin{rem}
Corollary \ref{co:generation} implies a corresponding generation result for
the stable
category of $RG$. That was observed by the author in the mid 90s and 
communicated to J.~Carlson who wrote an account of this in \cite{Ca}.
\end{rem}

\begin{thm}[Golod, Venkov, Evens]
Let $G$ be a finite $p$-group.
The ring $H^*(G)$ is finitely generated. Given
$M$ a finitely generated $kG$-module, then
$H^*(G,M)$ is a finitely generated $H^*(G)$-module.
\end{thm}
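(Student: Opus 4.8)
The plan is to deduce finite generation from the generation result of Corollary~\ref{co:generation} combined with Lemma~\ref{le:onto}, feeding them into a standard Noetherian/filtration argument. The key conceptual point is that Corollary~\ref{co:generation} expresses an arbitrary $kG$-module (and in particular $k$ itself) as built by finitely many triangles out of modules induced from one-dimensional representations of elementary abelian subgroups, while Lemma~\ref{le:onto} controls the cohomology of each such elementary abelian subgroup over $H^{\mathrm{even}}(G)$. So **first I would** reduce the problem to producing a single Noetherian ring over which everything is finite.

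Let me sketch the steps. **First,** set $A = H^{\mathrm{even}}(G)$ and observe that $H^*(G)$ is finite over $A$ (since $\xi^2 \in H^{\mathrm{even}}(G)$ for any $\xi$, at least when $p=2$ this is immediate, and for odd $p$ the odd part squares into $A$), so it suffices to show $A$ is finitely generated as a $k$-algebra; equivalently, by a theorem of the Artin--Tate / Eakin--Nagata type, it suffices to find a finitely generated subalgebra $R \subseteq A$ over which $H^*(G)$ is finite as a module. **Second,** for each elementary abelian subgroup $E \le G$, Lemma~\ref{le:onto} gives that $H^*(E)$ is finite over $H^{\mathrm{even}}(G)$, acting through $\res_E^G$; by a Noether-normalization-style argument one extracts from the (finitely many) subgroups $E$ a finitely generated subalgebra $R$ of $A$ such that each $H^*(E)$ is finite over $R$. **Third,** and this is where Corollary~\ref{co:generation} enters: since $D^b(kG) = \CI$ is generated as a thick subcategory by the $\Ind_E^G M$, the cohomology $\Ext^*_{kG}(k, N) = H^*(G,N)$ of any finitely generated module $N$ can be computed through a finite tower of distinguished triangles whose vertices are $\Ext^*_{kG}(k, \Ind_E^G M) \cong H^*(E, \Res_E^G M)$ by Shapiro's lemma. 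Each such term is finite over $R$ by the previous step, and finiteness over a Noetherian ring is preserved under cones in the long exact sequence, so $H^*(G,N)$ is finite over $R$, hence over $A$.

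**The main obstacle** I anticipate is the passage in the third step from ``each generator has finite cohomology over $R$'' to ``every object of $\CI$ does.'' This requires knowing that $R$ is Noetherian---which follows once $R$ is a finitely generated commutative (graded-commutative) $k$-algebra---and that the class of complexes $X$ with $H^*(G,X)$ finite over $R$ is a thick subcategory: it is plainly closed under shifts and direct summands, and closure under cones follows from the long exact sequence in cohomology together with the fact that an extension of finite $R$-modules is finite. Granting these, the fact that $\CI$ is the \emph{smallest} thick subcategory containing the generators forces every $X \in D^b(kG)$, in particular $X = M$ for any finitely generated $M$, to have $H^*(G,M)$ finite over $R$. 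Taking $M = k$ yields finite generation of the ring $H^*(G)$, and the general $M$ statement is then immediate. Care is needed to check that the $R$-module structures match up compatibly across Shapiro's isomorphism---i.e.\ that the $H^*(G)$-action on $H^*(E,\Res_E^G M)$ is the one through restriction---but this is the standard projection-formula compatibility and should present no serious difficulty.
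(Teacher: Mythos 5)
Your proposal is correct and follows essentially the same route as the paper: use Lemma~\ref{le:onto} to produce a finitely generated subalgebra $S$ of $H^{\mathrm{even}}(G)$ over which each $H^*(E)$ is finite, observe via the long exact sequence and Shapiro's lemma that the complexes $C$ with $H^*(G,C)$ finite over $S$ form a thick subcategory containing the $\Ind_E^G(k)$, and conclude by Corollary~\ref{co:generation}. The only cosmetic differences are your preliminary reduction to $H^{\mathrm{even}}(G)$ (which the paper bypasses, since taking $M=k$ in the main argument already gives $H^*(G)$ finite over the Noetherian algebra $S$) and the remark that over $\BF_p$ the only one-dimensional representation of an elementary abelian $p$-group is trivial, so only $\Ind_E^G(k)$ actually needs to be checked.
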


Note that the case where $G$ is an arbitrary finite group follows easily, cf
\cite{Ev}.

\begin{proof}
Let $S$ be a finitely generated subalgebra of
$H^{\mathrm{even}}(G)$ such that $H^*(E)$ is a finitely generated
$S$-module for every elementary abelian subgroup $E$ of $G$. Such an
algebra exists by Lemma \ref{le:onto}.

\smallskip
Let $\CJ$ be the full subcategory of $D^b(kG)$
of complexes $C$ such that the $S$-module
$H^*(G,C)=\bigoplus_i\Hom_{D^b(kG)}(k,C[i])$ is
finitely generated.

Let $C_1\to C_2\to C_3\rightsquigarrow$ be a distinguished triangle in
$D^b(kG)$. We have a long exact sequence
$$\cdots\to H^i(C_1)\to H^i(C_2)\to H^i(C_3)\to H^{i+1}(C_1)\to\cdots$$
Assume $C_1,C_3\in\CJ$. Let $I$ be a finite generating set
of $H^*(C_1)$ as an $S$-module and $J$ a finite generating set
of $\ker(H^*(C_3)\to H^{*+1}(C_1))$ as an $S$-module. Let $I'$ be the image
of $I$ in $H^*(C_2)$ and let $J'$ be a finite subset of $H^*(C_2)$ with
image $J$. Then, $I'\cup J'$ generates $H^*(C_2)$ as an $S$-module, hence
$C_2\in\CJ$.

Note that if $C\oplus C'\in\CJ$, then $C\in\CJ$.
We deduce that $\CJ$ is a thick subcategory of $D^b(kG)$.

\smallskip
Let $E$ be an elementary abelian subgroup of $G$.
Since $H^*(G,\Ind_E^G(k))\simeq H^*(E,k)$ is a finitely generated $S$-module,
we deduce that $\Ind_E^G(k)\in\CJ$.

We deduce from Corollary \ref{co:generation} that $\CJ=D^b(kG)$.
\end{proof}

\end{document}